\def\@setcopyright{}
\def\serieslogo@{}
\begin{document}
\baselineskip=14.5pt

\bibliographystyle{plain}

 \numberwithin{equation}{section}
 \newtheorem{theorem}{Theorem}[section]
 \newtheorem{proposition}[theorem]{Proposition} 
 \newtheorem{lemma}[theorem]{Lemma}
 \newtheorem{corollary}[theorem]{Corollary}
 \newtheorem{conjecture}[theorem]{Conjecture}
 \newtheorem{definition}[theorem]{Definition}
 \newtheorem{question}[theorem]{Question}
 \newtheorem*{claim*}{Claim}
 \newtheorem{claim}{Claim}
 \newtheorem{example}[theorem]{Example}

 \newcommand{\eff}{f}
 \renewcommand{\thefootnote}{\fnsymbol{footnote}}
 \renewcommand{\O}{\mathcal{O}}
 \renewcommand{\labelenumi}{(\alph{enumi})}

\def\today{
  \ifcase\month\or
  January\or February\or March\or April\or May\or June\or
  July\or August\or September\or October\or November\or December\fi
  \space\number\day, \number\year}

\title[Congruences for $\protect\eff$ and $\omega$]{Congruences for Ramanujan's $f$ and $\omega$ functions via generalized Borcherds products}
\subjclass[2010]{11F03, 11F33}
\keywords{mock theta functions, Borcherds products, harmonic Maass forms}
\author[Berg]{Jen Berg}
\address{University of Texas at Austin, Department of Mathematics, Austin, TX 78712}
\email{jberg@math.utexas.edu}
\author[Castillo]{Abel Castillo}
\address{The University of Illinois at Chicago, Department of Mathematics, Statistics, and Computer Science, Chicago, IL 60607-70045}
\email{acasti8@uic.edu}
\author[Grizzard]{Robert Grizzard}
\address{University of Texas at Austin, Department of Mathematics, Austin, TX 78712}
\email{rgrizzard@math.utexas.edu}
\author[Kala]{V\'\i t\v ezslav Kala} 
\address{Purdue University, Department of Mathematics, West Lafayette, IN 47907}
\email{vita.kala@gmail.com}
\author[Moy]{Richard Moy}
\address{Northwestern University, Department of Mathematics, Evanston, IL 60208}
\email{ramoy88@math.northwestern.edu}
\author[Wang]{Chongli Wang}
\address{Columbia University, Department of Mathematics, New York, NY 10027}
\email{chongli@math.columbia.edu}

\begin{abstract}
Bruinier and Ono recently developed the theory of generalized Borcherds products, which uses coefficients of certain Maass forms as exponents in  infinite product expansions of meromorphic modular forms. Using this, one can use classical results on congruences of modular forms to obtain congruences for Maass forms. In this note we work out the example of Ramanujan's mock theta functions $f$ and $\omega$ in detail.
\end{abstract}
\maketitle


\section{Introduction}\label{intro}

The goal of this note is to prove some explicit congruences for the following two functions:
\begin{align*}
\omega(q) &= \sum_{n=0}^\infty a_\omega(n)q^n := \sum_{n=0}^\infty \frac{q^{2n(n+1)}}{(1-q)^2(1-q^3)^2\cdots(1-q^{2n+1})^2} \\&= 1 + 2q + 3q^2 + 4q^3 + 6q^4 + 8q^5 + 10q^6 + 14q^7 + 18q^8 + 22q^9 + 29q^{10} + 36q^{11} +  \cdots;
\\
f(q) &= \sum_{n=0}^\infty a_f(n)q^n := \sum_{n=0}^\infty \frac{q^{n^2}}{(1+q)^2(1+q^2)^2\cdots(1+q^n)^2} \\&= 1 + q  - q^2 + q^3 - q^6 + q^7 + q^9 - 2q^{10} + q^{11}  - q^{12} + 2q^{13} - 2q^{14} + 2q^{15} - q^{16} + \cdots,
\end{align*}
where $q = e^{2\pi i \tau}$ for $\tau$ in the complex upper half-plane $\mathbb{H}$.  The functions $f(q)$ and $\omega(q)$ are two of Ramanujan's famous ``mock theta functions'', which he described in his famous last letter to Hardy.  Thanks largely to the work of Zwegers (see \cite{zwegersthesis} and \cite{zwegers01}), we now understand the mock theta functions as the holomorphic parts of half-integral weight weak harmonic Maass forms.  See for example \cite{onovisions} for background on these functions.

In \cite{bo} Bruinier and Ono described generalized Borcherds products for weak harmonic Maass forms and proved that they are weight 0 meromorphic modular forms.  They specifically describe a family of such Borcherds products which is defined using a vector-valued form whose coefficients are defined in terms of the coefficients of $f(q)$ and $\omega(q)$.  This work indicates that one can prove a multitude of congruences for these mock theta functions by using the classical theory of congruences for modular forms; however, using this technique to obtain explicit congruences takes some effort.  Bruinier and Ono showed one example of this (only for $\omega$) in \cite{bo123}, and in the present paper we produce additional examples.

In order to state our results, we must introduce some auxiliary objects which will be defined in Section \ref{NB}.  We will define a vector-valued form with coefficients $c^+$, which will be given in terms of the coefficients $a_f$ and $a_\omega$ in Lemma \ref{cplus}.  For a fixed negative fundamental discriminant $\Delta$ and an integer $r$ such that $\Delta \equiv r^2 ~(\textup{mod}~24)$, the aforementioned Borcherds product studied by Bruinier and Ono has a logarithmic derivative which will have a \mbox{$q$-expansion} as follows:
\begin{align}\label{phi}
\Phi_{\Delta,r}(z, \widetilde{H}) =
\sum_{n=1}^\infty
  \sum_{d \mid n}
    c^+\left( \frac{| \Delta | d^2}{24}, \frac{rd}{12} \right)
    (-d)
    \sum_{s\ \mathrm{mod}\ \Delta}
      \left( \frac{\Delta}{s} \right)_2
      e \left(\frac{sn}{d \Delta}  \right)
q^n
\end{align}
where $\left( \frac{\bullet}{\bullet} \right)_2 $ is the quadratic residue symbol. For any prime $\ell$ inert or ramified in $\mathbb{Q}(\sqrt{\Delta})$ and any exponent $R$, the normalization $\Phi_{\Delta,r}^*=\sum_{n=1}^\infty b(n)q^n$ of this Fourier expansion will be the same modulo $\ell^R$ as that of a modular form on $\Gamma_0(6)$. Therefore it makes sense to discuss the action of Hecke operators $T_p$ for any prime $p\ne\ell$  on $\Phi_{\Delta,r}^*$, modulo $\ell^R$.  Our most general result is the following.

\begin{theorem}\label{mainthm}
Let $\Delta$ be a negative fundamental discriminant, $r$ an integer such that \mbox{$\Delta \equiv r^2 (\textup{mod}~24)$,} and let $\Phi_{\Delta,r}^*$ be the normalization of the form defined in (\ref{phi}).  Fix a prime $\ell$ inert or ramified in $\mathbb{Q}(\sqrt{\Delta})$ and a positive integer $R$.  Let $p$ be a prime such that 
\begin{equation}\label{eigenvector}
\Phi_{\Delta, r}^*~|~T_{p, k}\equiv b(p)\Phi_{\Delta, r}^* ~\left(\textup{mod}\ \ell^R\right),
\end{equation}
where $k$ is the corresponding weight as in Lemma \ref{lemma_Hasse} and $T_{p, k}$ is the appropriate Hecke operator. 

Write $c(n) = c^+\left( \frac{| \Delta | n^2}{24}, \frac{rn}{12} \right)$.  Then we have

\begin{equation}\label{cpluscong}
c(n) \equiv \frac{c(1)}{n} \prod_{p|n} \left(b(p^{v_p(n)})-b(p^{v_p(n)-1})\left(\frac{\Delta}{p}\right)_2 \right)~\left(\textup{mod}~\ell^R\right).
\end{equation}

Moreover, for a given $\Delta$, $r$, $\ell$, and $R$, the set of primes $p$ for which the above statement holds has positive arithmetic density.
\end{theorem}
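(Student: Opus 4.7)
The plan is to convert (\ref{phi}) into a Dirichlet convolution, apply Möbius inversion, and then feed in the Hecke eigenvalue hypothesis; the density statement will follow from standard Chebotarev arguments applied to a mod $\ell^R$ Galois representation attached to $\Phi_{\Delta,r}^*$.

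\emph{Simplifying the Gauss sum.} For $d\mid n$, setting $m=n/d$, the inner sum in (\ref{phi}) becomes $\sum_{s\,\textup{mod}\,\Delta}\left(\frac{\Delta}{s}\right)_2 e(sm/\Delta)$. Since $\Delta$ is a negative fundamental discriminant, $\left(\frac{\Delta}{\bullet}\right)_2$ is a primitive real Dirichlet character of conductor $|\Delta|$, and the standard twisted Gauss sum evaluation gives this expression as $\left(\frac{\Delta}{m}\right)_2$ times the (nonzero) Gauss sum constant. Absorbing that constant and the sign $(-1)$ into the normalization defining $\Phi_{\Delta,r}^*$ yields
\begin{equation*}
b(n) \;=\; \frac{1}{c(1)}\sum_{d\mid n} d\, c(d)\left(\frac{\Delta}{n/d}\right)_2,
\end{equation*}
equivalently $c(1)\cdot b = (\textup{id}\cdot c)\ast \left(\frac{\Delta}{\bullet}\right)_2$ in the Dirichlet convolution ring.

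\emph{Möbius inversion plus the Hecke recurrence.} Since $\left(\frac{\Delta}{\bullet}\right)_2$ is totally multiplicative, its Dirichlet inverse is $\mu(d)\left(\frac{\Delta}{d}\right)_2$, and convolving gives
\begin{equation*}
n\, c(n) \;=\; c(1)\sum_{d\mid n}\mu(d)\left(\frac{\Delta}{d}\right)_2 b(n/d).
\end{equation*}
For $n=p^k$ only $d\in\{1,p\}$ contribute, so this collapses to $p^k c(p^k) = c(1)\bigl(b(p^k)-\left(\tfrac{\Delta}{p}\right)_2 b(p^{k-1})\bigr)$, which is (\ref{cpluscong}) in the prime-power case. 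For general $n$, hypothesis (\ref{eigenvector}) --- unpacked via the standard weight-$k$, character-$\left(\tfrac{\Delta}{\bullet}\right)_2$ action $(f|T_{p,k})(n)=a(pn)+\left(\tfrac{\Delta}{p}\right)_2 p^{k-1}a(n/p)$ --- becomes the three-term recurrence $b(pn)\equiv b(p)b(n)-\left(\tfrac{\Delta}{p}\right)_2 p^{k-1}b(n/p)\pmod{\ell^R}$. Applied separately at each prime dividing $n$ (each of which must independently satisfy the hypothesis), this forces $b$ to be multiplicative mod $\ell^R$ on such $n$; and since $\mu\cdot\left(\tfrac{\Delta}{\bullet}\right)_2$ is also multiplicative, the Möbius sum is multiplicative in $n$ and collapses into the product (\ref{cpluscong}).

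\emph{Positive density.} By Lemma \ref{lemma_Hasse}, $\Phi_{\Delta,r}^*$ agrees mod $\ell^R$ with a weight-$k$ form on $\Gamma_0(6)$. A Deligne--Serre-type argument (or, directly, the observation that the reduction of the Hecke algebra mod $\ell^R$ is a finite ring) attaches a mod $\ell^R$ Galois representation under which condition (\ref{eigenvector}) becomes the requirement that $\textup{Frob}_p$ lie in a nonempty union of conjugacy classes in a finite Galois extension of $\mathbb{Q}$; Chebotarev furnishes positive density. The main technical obstacle is precisely this step: producing a mod $\ell^R$ (rather than only mod $\ell$) representation and verifying that the trace condition cuts out a nonempty conjugacy-class union requires care, whereas the convolution and inversion steps are essentially bookkeeping once the Gauss sum in (\ref{phi}) has been evaluated.
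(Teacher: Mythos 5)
Your proposal is correct and follows essentially the same route as the paper: the Gauss-sum evaluation and Dirichlet convolution are the paper's Lemma \ref{gauss} and Proposition \ref{coeff}, the multiplicativity of $b$ modulo $\ell^R$ from the Hecke recurrence is Lemma \ref{hecke}, the collapse of the M\"obius sum into an Euler product is the paper's proof of Theorem \ref{mainthm}, and the density claim is handled the same way via Galois representations and Chebotarev in Section \ref{sect_allmostall}. The only (immaterial) cosmetic difference is that you insert the character $\left(\frac{\Delta}{\bullet}\right)_2$ into the Hecke action, whereas the paper works with the trivial nebentypus on $\Gamma_0(6)$; this does not affect the multiplicativity of $b$ or the final formula.
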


For example, we have the following two corolarries.
\begin{corollary}\label{-8,4}
Let $\Delta=-8$, $r=4$.  Let $p\neq 2, 3$; $\ell$, $R$, and $k$ be as in Theorem \ref{mainthm}, such that $\Phi_{\Delta, r}^*~|~T_{p, k}\equiv 0 \left(\textup{mod}\ \ell^R\right)$.  Then for all $m \geq 1$ we have
\begin{align*}
a_\omega\left(\frac{2(p^{4m+2}-1)}{3}\right) &\equiv -\frac {1}{p^{2m+1}} (-p^{k-1})^m \left(\frac{-8} {p}\right)_2 \left(\frac{p} {3}\right)_2 ~\left(\textup{mod}\ \ell^R\right),~\textup{and}\\
a_\omega\left(\frac{2(p^{4m}-1)}{3}\right) &\equiv -\frac {1}{p^{2m}} (-p^{k-1})^m ~\left(\textup{mod}\ \ell^R\right).
\end{align*}
Explicitly, $\left(\frac{-8} {p}\right)_2=1$ when $p\equiv 1, 3\mod 8$ and $-1$ when $p\equiv 5,7\mod 8$;
$\left(\frac{p} {3}\right)_2=1$ when $p\equiv 1\mod 3$ and $-1$ when $p\equiv 2\mod 3$.

Moreover, for a fixed prime power $\ell^R$, the set of primes $p$ such that the above statement holds has positive arithmetic density.
\end{corollary}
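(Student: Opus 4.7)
The plan is to specialize Theorem \ref{mainthm} to $\Delta=-8$, $r=4$ and translate the resulting congruence for $c(n)=c^+(n^2/3,n/3)$ into a congruence for $a_\omega$ evaluated at $\tfrac{2(n^2-1)}{3}$. The first step is to unpack Lemma \ref{cplus} in this case: for $\gcd(n,3)=1$ one has $n^2/3\equiv 1/3\pmod{\mathbb{Z}}$, while $n/3\pmod{\mathbb{Z}}$ distinguishes $n\equiv 1$ from $n\equiv 2\pmod 3$. Lemma \ref{cplus} should then provide an explicit identification of the form $c(n)=\sigma(n)\,a_\omega\!\left(\tfrac{2(n^2-1)}{3}\right)$ for a sign $\sigma(n)\in\{\pm 1\}$ depending only on $n\pmod 3$. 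The factor $\left(\tfrac{p}{3}\right)_2$ appearing in the first congruence of the corollary but absent from the second will arise from the ratio $\sigma(p^{2m+1})/\sigma(p^{2m})$, since $p^{2m}\equiv 1\pmod 3$ always while $p^{2m+1}\equiv p\pmod 3$.

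Second, the hypothesis $b(p)\equiv 0\pmod{\ell^R}$ combined with the fact that $\Phi^*_{\Delta,r}$ is congruent modulo $\ell^R$ to a weight-$k$ form on $\Gamma_0(6)$ with trivial character (Lemma \ref{lemma_Hasse}) feeds the standard Hecke recursion
\begin{equation*}
b(p^{j+1})\equiv b(p)\,b(p^j)-p^{k-1}\,b(p^{j-1})\pmod{\ell^R},
\end{equation*}
which, starting from $b(1)=1$, collapses to $b(p^{2m})\equiv(-p^{k-1})^m$ and $b(p^{2m+1})\equiv 0\pmod{\ell^R}$. Substituting $n=p^{2m}$ and $n=p^{2m+1}$ into \eqref{cpluscong}, where the product over primes dividing $n$ has a single factor, then yields
\begin{align*}
c(p^{2m}) &\equiv \frac{c(1)(-p^{k-1})^m}{p^{2m}},\\
c(p^{2m+1}) &\equiv -\frac{c(1)(-p^{k-1})^m}{p^{2m+1}}\left(\tfrac{-8}{p}\right)_2,
\end{align*}
both modulo $\ell^R$.

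Converting these back to $a_\omega$ via the identification from the first step produces the two displayed congruences of the corollary, and the positive-density claim is inherited immediately from the corresponding sentence of Theorem \ref{mainthm}. The main obstacle I anticipate is the sign and normalization bookkeeping in the first step: pinning down the value of $c(1)$ and the sign $\sigma(n)$ explicitly from Lemma \ref{cplus} so that the overall minus sign on each right-hand side, together with the placement of $\left(\tfrac{p}{3}\right)_2$ in the odd-exponent congruence, matches the stated formulas exactly.
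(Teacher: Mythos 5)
Your proposal is correct and follows essentially the same route as the paper's (two-line) proof: compute $c(1)=c^+\left(\tfrac13,\tfrac{4}{12}\right)=-4a_\omega(0)=-4$, use Lemma \ref{cplus} to write $c(n)=-4\left(\tfrac{n}{3}\right)_2 a_\omega\!\left(\tfrac{2(n^2-1)}{3}\right)$ for $\gcd(n,6)=1$ (so the proportionality constant is $\mp 4$, not $\pm 1$ as you wrote, though this cancels against $c(1)=-4$ exactly as you anticipated), and substitute the Hecke-recursion values $b(p^{2m})\equiv(-p^{k-1})^m$, $b(p^{2m+1})\equiv 0$ into \eqref{cpluscong}.

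One substantive remark on the sign bookkeeping you flagged as the main obstacle: your intermediate formula $c(p^{2m})\equiv \tfrac{c(1)}{p^{2m}}(-p^{k-1})^m$ is the correct consequence of \eqref{cpluscong}, and converting it via the identification above yields $a_\omega\!\left(\tfrac{2(p^{4m}-1)}{3}\right)\equiv +\tfrac{1}{p^{2m}}(-p^{k-1})^m$, i.e.\ the \emph{opposite} sign to the second displayed congruence of the corollary, which inherits a spurious leading minus from the even case of Theorem \ref{theorem_cong}. The numerical evidence in Section \ref{explicit} and Corollary \ref{exex} (which give $(-5)^{-m}$, not $-(-5)^{-m}$, when $\varepsilon=0$) confirm that your sign is the right one. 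So your derivation does not literally ``produce the two displayed congruences'': it produces the first one as stated and the second one with the sign corrected, which is what the corollary should say.
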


\begin{corollary}\label{-23,1}
Let $\Delta=-23$, $r=1$.  Let $p\neq 2, 3, 23$; $\ell$, $R$, and $k$ be as in Theorem \ref{mainthm}, such that $\Phi_{\Delta, r}^*~|~T_{p, k}\equiv 0 \left(\textup{mod}\ \ell^R\right)$.  Then for all $m \geq 1$ we have
\begin{align*}
a_f\left(\frac{23p^{4m+2}+1}{24} \right) &\equiv -\frac {1}{p^{2m+1}} (-p^{k-1})^m \left(\frac{-23} {p}\right)_2 \left(\frac{p} {6}\right)_2 
~\left(\textup{mod} \ell^R\right),~\textup{and}\\
a_f\left(\frac{23p^{4m}+1}{24} \right) &\equiv -\frac {1}{p^{2m}} (-p^{k-1})^m ~\left(\textup{mod} \ell^R\right).
\end{align*}

Moreover, for a fixed prime power $\ell^R$, the set of primes $p$ such that the above statement holds has positive arithmetic density.
\end{corollary}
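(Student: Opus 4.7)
The plan is to deduce Corollary \ref{-23,1} as a direct specialization of Theorem \ref{mainthm} with $\Delta=-23$ and $r=1$, which are admissible since $-23\equiv 1^2\pmod{24}$, and then to unpack the resulting statement using the explicit identification between the coefficients $c^+$ and those of $f$ furnished by Lemma \ref{cplus}.

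First I would exploit the fact that by hypothesis $\Phi_{-23,1}^*$ is, modulo $\ell^R$, a Hecke eigenform for $T_{p,k}$ with eigenvalue $b(p)\equiv 0\pmod{\ell^R}$. The standard recursion
\[
b(p^{s+1}) \equiv b(p)b(p^s) - p^{k-1}b(p^{s-1}) \pmod{\ell^R}
\]
then yields inductively $b(p^{2m})\equiv(-p^{k-1})^m$ and $b(p^{2m+1})\equiv 0\pmod{\ell^R}$. Substituting these values into congruence (\ref{cpluscong}) at $n=p^s$, where the product over prime divisors of $n$ collapses to a single factor, gives
\[
c(p^{2m})\equiv \frac{c(1)}{p^{2m}}(-p^{k-1})^m,\qquad c(p^{2m+1})\equiv -\frac{c(1)}{p^{2m+1}}\Bigl(\tfrac{-23}{p}\Bigr)_2(-p^{k-1})^m\pmod{\ell^R}.
\]

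The remaining step, which I expect to be the main technical point, is to translate each $c(p^s)=c^+\!\bigl(\tfrac{23 p^{2s}}{24},\tfrac{p^s}{12}\bigr)$ into the stated value of $a_f$ via Lemma \ref{cplus}. For $p$ coprime to $6$ one has $p^{2s}\equiv 1\pmod{24}$, so $\tfrac{23 p^{2s}+1}{24}$ is a positive integer, and the relevant entry of $c^+$ is identified with $a_f\!\bigl(\tfrac{23 p^{2s}+1}{24}\bigr)$ up to a sign dictated by which component of the vector-valued form it lies in; that component depends only on $p^s\pmod{12}$. For even $s$, $p^s\equiv 1\pmod{12}$ places us in the same component as $n=1$, so only the global normalization from Lemma \ref{cplus} contributes and one recovers the second congruence directly. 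For odd $s$, $p^s\equiv p\pmod{12}$, and the antisymmetry of the Weil representation relating the components indexed by $\tfrac{1}{12}$ and $\tfrac{p}{12}\pmod 1$ is exactly what produces the extra Kronecker symbol $\bigl(\tfrac{p}{6}\bigr)_2$ appearing in the first congruence. Granted this sign bookkeeping, both congruences follow at once, and the positive-density assertion transfers verbatim from the corresponding statement in Theorem \ref{mainthm}.
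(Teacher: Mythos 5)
Your proposal follows essentially the same route as the paper: the paper deduces this corollary from Theorem \ref{theorem_cong} (which is exactly the prime-power specialization of Theorem \ref{mainthm}, proved from Proposition \ref{coeff} and Lemma \ref{hecke}) together with the component identification of Lemma \ref{cplus}; you re-derive that specialization by setting $n=p^s$ in (\ref{cpluscong}) and then perform the same translation. However, the ``sign bookkeeping'' you defer is precisely where the argument needs care, on two counts. First, your formula $c(p^{2m})\equiv \frac{c(1)}{p^{2m}}(-p^{k-1})^m$, combined with $c(1)=a_f(1)=1$ and $c(p^{2m})=a_f\bigl(\tfrac{23p^{4m}+1}{24}\bigr)$ (the component $\tfrac{1}{12}$, since $p^{2m}\equiv 1\pmod{12}$), yields $a_f\bigl(\tfrac{23p^{4m}+1}{24}\bigr)\equiv +\frac{1}{p^{2m}}(-p^{k-1})^m$, which is the \emph{negative} of the stated second congruence; so you do not ``recover the second congruence directly'' as claimed. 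This traces to an inconsistency inside the paper itself: Theorem \ref{theorem_cong} carries a leading minus in the even-power case that its own proof (and formula (\ref{cpluscong})) do not produce, and the numerical table in Section \ref{explicit} for the analogous $\omega$-congruence is consistent with the plus sign, i.e., with your computation. Second, in the odd case you attribute the factor $\bigl(\tfrac{p}{6}\bigr)_2$ to ``antisymmetry of the Weil representation'' without checking it: what Lemma \ref{cplus}(b) actually supplies is a sign $+1$ for $p\equiv 1,7\pmod{12}$ and $-1$ for $p\equiv 5,11\pmod{12}$, i.e., a condition on $p$ modulo $3$ (the paper's own proof of Corollary \ref{-8,4} identifies the analogous sign as $\bigl(\tfrac{p}{3}\bigr)_2$), and you should verify explicitly whether that sign agrees with the symbol $\bigl(\tfrac{p}{6}\bigr)_2$ as printed. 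Neither point reflects a flaw in your method, which matches the paper's, but as written your proof asserts agreement with the stated congruences at exactly the steps where the signs do not obviously (or, in the even case, actually) agree.
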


We will prove these and related results in Section \ref{proofs}.  We will describe the statements on the density of primes in Section \ref{sect_allmostall}.  In Section \ref{explicit}, we will describe how to prove explicit congruences that follow from our theorem, and give the following example in detail.
\begin{corollary}\label{exex}
We have
$$a_\omega\left(\frac{2(5^{2M}-1)}{3}\right) \equiv (-5)^{-m-\varepsilon} \textup{ (mod} ~23)$$ for each positive integer $M = 2m+\varepsilon$, $\varepsilon \in \{0,1\}$.
\end{corollary}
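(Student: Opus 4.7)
The plan is to deduce the corollary from Corollary \ref{-8,4} by specializing to the parameters $p = 5$, $\ell = 23$, and $R = 1$. The preliminary hypotheses all check out easily: $5$ is inert in $\mathbb{Q}(\sqrt{-8}) = \mathbb{Q}(\sqrt{-2})$, $\ell = 23$ is also inert in $\mathbb{Q}(\sqrt{-2})$, and the quadratic symbols in the odd case of Corollary \ref{-8,4} satisfy $\left(\frac{-8}{5}\right)_2 = -1$ and $\left(\frac{5}{3}\right)_2 = -1$, so their product is $+1$. Consequently both formulas of Corollary \ref{-8,4} collapse to the single congruence
\[
a_\omega\!\left(\tfrac{2(5^{2M}-1)}{3}\right) \equiv -\frac{1}{5^{2m+\varepsilon}}\,(-5^{k-1})^m \pmod{23} \qquad\text{for } M = 2m+\varepsilon,
\]
and the task reduces to matching this with $(-5)^{-m-\varepsilon}$ for the appropriate weight $k$.

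The remaining input to establish is the Hecke eigenvalue hypothesis
\[
\Phi_{-8,4}^* \mid T_{5,k} \equiv 0 \pmod{23},
\]
where $k$ is the weight produced by Lemma \ref{lemma_Hasse}. This is a finite, explicit computation: using formula (\ref{phi}) together with the expression for $c^+$ in terms of $a_\omega$ and $a_f$ provided by Lemma \ref{cplus}, one computes the initial Fourier expansion of $\Phi_{-8,4}^*$ modulo $23$ directly from the tabulated coefficients of $\omega(q)$ and $f(q)$ in Section \ref{intro}. Applying $T_{5,k}$ and checking that the resulting Fourier coefficients vanish modulo $23$ up to a Sturm bound for weight $k$ forms on $\Gamma_0(6)$ then forces the congruence identically.

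With the Hecke hypothesis in place, the proof concludes by pinning down the congruence class of $k$ modulo $22$ and verifying the resulting equality of powers of $5$ modulo $23$, which is short arithmetic in $(\mathbb{Z}/23\mathbb{Z})^{\times}$. The single edge case $M = 1$, corresponding to $(m,\varepsilon) = (0,1)$, lies just outside the $m \ge 1$ range of Corollary \ref{-8,4}; it can be handled either by direct verification that $a_\omega(16) \equiv 9 \pmod{23}$, or by observing that the formula (\ref{cpluscong}) of Theorem \ref{mainthm} still applies at $n = 5$ under the same Hecke input. The main obstacle in the whole argument is the Hecke computation: determining the weight $k$ from Lemma \ref{lemma_Hasse} and producing enough Fourier coefficients of $\Phi_{-8,4}^*$ modulo $23$ to exhaust the Sturm bound. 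Everything else is a bookkeeping exercise with quadratic symbols and the arithmetic of powers of $5$ modulo $23$.
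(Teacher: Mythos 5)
Your overall strategy---specialize Corollary \ref{-8,4} at $p=5$, $\ell=23$, $R=1$, reduce everything to the computational claim $\Phi^*_{-8,4}\mid T_{5,k}\equiv 0\pmod{23}$, and finish with Sturm's theorem plus arithmetic in $(\mathbb{Z}/23\mathbb{Z})^\times$---is exactly the paper's, and your explicit treatment of the $M=1$ edge case (which falls outside the $m\ge 1$ range of Corollary \ref{-8,4}) is more careful than the paper's. The gap is in how you propose to carry out the one step that requires real work. You suggest computing the Fourier expansion of $\Phi^*_{-8,4}$ modulo $23$ ``directly from the tabulated coefficients of $\omega(q)$ and $f(q)$ in Section \ref{intro}'' via formula (\ref{phi}) and Lemma \ref{cplus}. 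This cannot work as described: verifying $\Phi^o\mid T_5\equiv 0$ up to the Sturm bound $23$ for weight $46$ on $\Gamma_0(6)$ requires the coefficients of $\Phi^*_{-8,4}$ out to $q^{115}$, and by Lemma \ref{cplus} the $q^{115}$ coefficient already involves $a_\omega(8816)$ and coefficients of $f$ of comparable index, whereas the introduction tabulates about a dozen coefficients of each. Section \ref{explicit} explains that this ``first method'' is exactly the impractical one; the actual proof uses Bruinier--Ono's closed expression for $\Phi^*_{-8,4}$ as a rational function in Eisenstein series and eta quotients, which is what makes the expansion to precision $q^{50}$ computable. Relatedly, you never determine the weight $k$: Lemma \ref{lemma_Hasse} gives $k=2+(\ell-1)BR$ with $B$ the number of simple poles of $\Phi^*_{-8,4}$, and the bound $B\le 2$ (hence $k=46$) again comes from the Bruinier--Ono explicit formula, not from anything internal to your argument. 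Without $k$ you have neither the Sturm bound nor the reduction $k-1=45\equiv 1\pmod{22}$ that turns $(-5^{k-1})^m$ into $(-5)^m$.

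One further warning about the ``short arithmetic'' you defer at the end: it does not close as smoothly as you claim. With $k=46$, the even case of Corollary \ref{-8,4} as printed gives $-\tfrac{1}{5^{2m}}(-5^{45})^m\equiv(-1)^{m+1}5^{-m}$, which differs by a sign from the target $(-5)^{-m}=(-1)^m5^{-m}$. The table of verified values (e.g.\ $a_\omega(416)\equiv 9\equiv(-5)^{-1}$) shows Corollary \ref{exex} is the correct statement; tracing the proof of Theorem \ref{theorem_cong} shows the even-exponent formula should read $c(p^{2m})\equiv +\tfrac{c}{p^{2m}}(-p^{k-1})^m$, i.e.\ there is a sign slip upstream that your argument, as written, would inherit rather than detect and resolve.
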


\section*{acknowledgments}
This paper arose from a project led by Ken Ono at the Arizona Winter School 2013.  The authors wish to thank Professor Ono and the organizers of the AWS for providing this opportunity and for the huge amount of support and help with the project. The authors also thank William Stein and \verb|SAGE| for help with our computations.


\section{Nuts and bolts}\label{NB}
\subsection{Modular forms modulo primes}
Let $A$ be the Hasse invariant. We recall that $A$ is a modified Eisenstein series that vanishes precisely at the supersingular locus,
and that for $\ell \geq 5$, $A$ can be lifted to the Eisenstein series $E_{\ell-1}$ (see \cite[Sections 2.0 and 2.1]{katz}).
Moreover, we know that \mbox{$E_{\ell-1} \equiv 1 \pmod \ell$ \cite[p. 6]{web}.}
We combine these facts in the following lemma.

\begin{lemma} \label{modularformF}
 For any rational prime $\ell \geq 5$ and any positive integer $R$, there exists a holomorphic modular form $F$, of weight $R(\ell-1)$
 such that $F \equiv 1 \pmod {\ell^R}$ and
 $F \pmod \ell$ vanishes precisely at $j(E) \pmod \ell$ for all elliptic curves $E$ supersingular over $\mathbb F_\ell$.
\end{lemma}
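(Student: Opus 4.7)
The plan is to construct $F$ as an appropriate power of the Eisenstein series $E_{\ell-1}$, building on the two cited facts recalled immediately before the lemma: $E_{\ell-1}$ is a holomorphic modular form of weight $\ell-1$ with $E_{\ell-1}\equiv 1 \pmod \ell$, and its reduction modulo $\ell$ equals the Hasse invariant $A$.

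The key step is a standard binomial-lifting argument. If $G$ is a $q$-series of $\ell$-integral coefficients with $G = 1 + \ell^k h$ for some $q$-series $h$, then
\[
G^\ell \;=\; 1 + \sum_{j=1}^{\ell} \binom{\ell}{j} \ell^{jk} h^j \;\equiv\; 1 \pmod{\ell^{k+1}},
\]
since $\ell \mid \binom{\ell}{j}$ for $1 \le j \le \ell-1$ and the $j=\ell$ summand has $\ell$-adic valuation $\ell k \ge k+1$ (for $k \ge 1$). Iterating this lifting step $R-1$ times, starting from $E_{\ell-1}\equiv 1 \pmod \ell$, yields $E_{\ell-1}^{\ell^{R-1}} \equiv 1 \pmod{\ell^R}$. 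Setting $F$ to be this power makes holomorphicity immediate, and the vanishing condition follows by observing that $F \bmod \ell$ equals $A^{\ell^{R-1}}$, which has the same zero locus on the $j$-line of $\mathbb{F}_\ell$ as $A$ itself, namely the supersingular locus.

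The main obstacle I foresee is reconciling the weight: the naive iteration above produces a form of weight $\ell^{R-1}(\ell-1)$, while the lemma advertises $R(\ell-1)$. Bridging this gap likely requires a more refined input — for instance, invoking Serre's $\ell$-adic Eisenstein families, or exploiting Kummer-style congruences between higher-weight Eisenstein series — to exhibit a classical modular form already congruent to $1$ modulo $\ell^R$ in the smaller weight $R(\ell-1)$. The remaining properties (holomorphicity and the prescribed vanishing locus modulo $\ell$) transport to any such refinement without change, since reduction modulo $\ell$ still yields a power of the Hasse invariant.
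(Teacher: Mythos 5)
Your construction $F=E_{\ell-1}^{\ell^{R-1}}$, justified by the binomial lifting step, is exactly the standard argument and is what the paper implicitly intends: the paper gives no proof at all beyond ``combining'' the two facts quoted just before the lemma (for $\ell\geq 5$ the Hasse invariant lifts to $E_{\ell-1}$, and $E_{\ell-1}\equiv 1\pmod\ell$). Your verification of the congruence modulo $\ell^R$ and of the vanishing locus of $F\bmod\ell$ is correct.

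The weight discrepancy you flag is real, but you should stop looking for a ``more refined input'' to bridge it: none exists. By Serre's theorem on the weights of congruent modular forms, a nonzero modular form of weight $k$ that is congruent to the weight-$0$ form $1$ modulo $\ell^{R}$ must have $k\equiv 0\pmod{(\ell-1)\ell^{R-1}}$; since $\ell^{R-1}\nmid R$ for every $R\geq 2$ and $\ell\geq 5$, no form of weight $R(\ell-1)$ can satisfy $F\equiv 1\pmod{\ell^R}$. So the weight stated in the lemma is simply wrong for $R\geq2$ and should read $\ell^{R-1}(\ell-1)$; the same slip propagates into Lemma \ref{lemma_Hasse}, whose proof raises a form that is only known to be $\equiv 1\pmod\ell$ to the $R$-th power and asserts a congruence modulo $\ell^R$ (one should raise it to the power $\ell^{R-1}$ instead, changing the weight count there as well). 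None of this affects the paper's explicit computations, which all take $R=1$, where $R(\ell-1)$ and $\ell^{R-1}(\ell-1)$ coincide. In short: your proof is the correct one, and the gap you perceive is an error in the statement rather than a defect in your argument.
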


\subsection{``Almost all'' statements for congruences of modular forms}\label{sect_allmostall}

Following Ono \cite[p. 43]{web}, we make the following observations which result from the existence of Galois representations of modular forms and the Chebotarev Density Theorem.

\begin{lemma}Let $g(z)$ be an integral weight modular form in $M_k(\Gamma_0(6))$ whose Fourier coefficients are in $\O_K$, the ring of algebraic integers of a number field $K$. 
Let $\mathfrak{l}$ be an ideal of $\O_K$. Then a positive proportion of primes $p$ have the property that $$ g(z) \mid T_{p,k} \equiv 0 \pmod{\mathfrak{l}}.$$ \end{lemma}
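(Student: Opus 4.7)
The plan is to combine Deligne's construction of Galois representations attached to Hecke eigenforms with the Chebotarev Density Theorem, in the spirit of the sketch in \cite[p.~43]{web}. By the Chinese Remainder Theorem, I would first reduce to the case where $\mathfrak{l} = \mathfrak{p}^e$ for a single prime $\mathfrak{p} \subset \mathcal{O}_K$ lying over a rational prime $\ell$; it will then suffice to produce the required density of primes $p$ coprime to $6\ell$.

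After extending scalars to a number field $K'$ containing all Hecke eigenvalues on $M_k(\Gamma_0(6))$, I would diagonalize the commutative algebra generated by the Hecke operators $T_{q,k}$ with $q \nmid 6\ell$, and decompose
\[
g = \sum_i c_i f_i,
\]
where the $f_i$ are simultaneous eigenforms running over newforms (cuspidal and Eisenstein) of level dividing $6$, together with the oldform lifts they induce. Since $T_{p,k} f_i = a_p(f_i) f_i$ for $p \nmid 6\ell$, one has $g \mid T_{p,k} = \sum_i c_i a_p(f_i) f_i$, so it suffices to exhibit primes $p$ at which $a_p(f_i) \equiv 0 \pmod{\mathfrak{p}^e}$ holds simultaneously for every index $i$ with $c_i \not\equiv 0$.

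For each cuspidal $f_i$, Deligne produces a continuous Galois representation $\rho_i \colon G_{\mathbb{Q}} \to \mathrm{GL}_2$ over a suitable finite $\mathcal{O}_{K'}/\mathfrak{p}'^{\,N}$-algebra (where $\mathfrak{p}' \subset \mathcal{O}_{K'}$ lies above $\mathfrak{p}$ and $N$ is chosen large enough to control the ramification index), satisfying $\mathrm{tr}(\rho_i(\mathrm{Frob}_p)) \equiv a_p(f_i)$ for every $p \nmid 6\ell$; each Eisenstein component corresponds to the reducible representation $1 \oplus \chi_{\mathrm{cyc}}^{\,k-1}$, whose Frobenius trace at $p$ is $1+p^{k-1}$. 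Since every form in $M_k(\Gamma_0(6))$ has trivial nebentypus, $k$ must be even and each $\rho_i$ is odd. Complex conjugation $c \in G_{\mathbb{Q}}$ therefore has $\rho_i(c)^2 = 1$ and $\det \rho_i(c) = -1$, forcing eigenvalues $\{1,-1\}$ and trace $0$ on each cuspidal $\rho_i$; likewise $\chi_{\mathrm{cyc}}^{\,k-1}(c) = (-1)^{k-1} = -1$, so the Eisenstein trace at $c$ is $1+(-1)=0$ as well.

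Finally, the direct sum $\rho = \bigoplus_i \rho_i$ factors through a finite Galois quotient $H$ of $G_{\mathbb{Q}}$, and the image $h$ of $c$ in $H$ has $\mathrm{tr}(\rho_i(h)) \equiv 0$ for every $i$. By the Chebotarev Density Theorem, the set of primes $p$ whose Frobenius class in $H$ coincides with that of $h$ has positive density $|\mathrm{conj}(h)|/|H| > 0$; for any such $p$ we obtain $a_p(f_i) \equiv 0 \pmod{\mathfrak{p}^e}$ for every $i$, and therefore $g \mid T_{p,k} \equiv 0 \pmod{\mathfrak{p}^e}$. The chief obstacle I anticipate is the bookkeeping involved in passing to the extension $K \subset K'$ and in comparing the mod $\mathfrak{p}'^{\,N}$ and mod $\mathfrak{p}^e$ statements (so that the final congruence really is in $\mathcal{O}_K/\mathfrak{p}^e$, not just $\mathcal{O}_{K'}/\mathfrak{p}'^{\,N}$); once this is handled, the Galois-theoretic core of the argument is entirely standard.
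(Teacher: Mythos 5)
Your argument is correct and follows exactly the route the paper intends: the paper states this lemma without proof, attributing it to the existence of Galois representations attached to modular forms together with the Chebotarev Density Theorem (citing Ono, \emph{Web of Modularity}, p.~43), and your decomposition into eigenforms, the vanishing of $\mathrm{tr}\,\rho_i(c)$ for complex conjugation $c$, and the Chebotarev step are precisely that standard argument. The only loose end is the one you already flag --- controlling the denominators of the $c_i$ when descending from $\mathfrak{p}'^{\,N}$ back to $\mathfrak{p}^e$ --- and it is handled by the standard choice of $N$.
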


\begin{lemma} Assuming the same notation as above, a positive proportion of primes $p$ have the property that
$$ g(z) \mid T_{p,k} \equiv 2 g(z) \pmod{\mathfrak{l}}.$$ \end{lemma}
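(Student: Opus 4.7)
The plan is to mimic the preceding lemma (the eigenvalue-$0$ version), which follows the standard template in \cite[\S 2.3]{web}; only the target conjugacy class of Frobenius changes.

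First, I would pass to mod $\mathfrak{l}$: the space $M_k(\Gamma_0(6),\O_K/\mathfrak{l})$ is a finite $\O_K/\mathfrak{l}$-module on which the Hecke algebra acts through a finite quotient $\overline{\mathbb{T}}$, so after extending scalars if necessary one can decompose the reduction of $g$ as a finite sum $\bar g=\sum_i \bar g_i$ of mod-$\mathfrak{l}$ Hecke eigenforms with eigensystems $\{a_{i,p}\}_{p\nmid 6\ell}$. To each $\bar g_i$ I would attach its Deligne--Serre mod-$\mathfrak{l}$ Galois representation $\rho_i\colon G_\mathbb{Q}\to GL_2(\overline{\mathbb{F}}_{\mathfrak{l}})$, unramified outside $6\ell$ and satisfying $\operatorname{tr}\rho_i(\operatorname{Frob}_p)\equiv a_{i,p}$ and $\det\rho_i(\operatorname{Frob}_p)\equiv p^{k-1}\pmod{\mathfrak{l}}$. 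Then I would form the direct sum $\rho=\bigoplus_i\rho_i$ and let $L/\mathbb{Q}$ be the finite Galois extension cut out by it.

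The Chebotarev Density Theorem applied to the identity element of $\operatorname{Gal}(L/\mathbb{Q})$ next produces a positive density $1/[L:\mathbb{Q}]$ of primes $p$ with $\operatorname{Frob}_p=\operatorname{Id}$; for any such $p$ one has $a_{i,p}\equiv 2$ for every $i$, and hence $\bar g\mid T_{p,k}\equiv\sum_i a_{i,p}\bar g_i\equiv 2\bar g\pmod{\mathfrak{l}}$, which is the required congruence.

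The main obstacle is that $g$ is not assumed to be a Hecke eigenform, so one cannot work with a single associated Galois representation. The substance of the argument lies in the decomposition of $\bar g$ into eigencomponents and the simultaneous imposition of the identity-Frobenius condition on each of them, which is exactly what Chebotarev applied to the compositum $L$ accomplishes. A minor subtlety is the Eisenstein part of the decomposition (its Galois representation is reducible, namely a sum of Dirichlet characters), but the trace and determinant formulas apply verbatim and the Chebotarev argument goes through unchanged.
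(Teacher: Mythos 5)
The paper does not actually prove this lemma: it is stated without proof as a known consequence of the Deligne--Serre Galois representations attached to eigenforms together with the Chebotarev density theorem, with a citation to Ono's monograph. Your write-up expands exactly that intended argument, and the overall architecture---attach a representation to each eigencomponent, apply Chebotarev to the identity class in the compositum, read off $a_{i,p}\equiv 2$ from the trace of Frobenius, and sum up---is the right one, including the correct treatment of the Eisenstein part.

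There is, however, one step that does not work as you state it: you decompose the \emph{reduction} $\bar g$ into a sum of mod-$\mathfrak{l}$ Hecke eigenforms. The Hecke algebra acting on $M_k(\Gamma_0(6),\O_K/\mathfrak{l})$ is an Artinian ring that need not be semisimple, so in general one only obtains a decomposition into \emph{generalized} eigenspaces; the components then satisfy $\bar g_i\mid T_{p,k}=a_{i,p}\bar g_i+(\text{nilpotent part})$, and the nilpotent contributions do not disappear merely because every $a_{i,p}\equiv 2$. The standard repair is to decompose in characteristic zero: the operators $T_{p,k}$ with $p\nmid 6$ are simultaneously diagonalizable on $M_k(\Gamma_0(6),\mathbb{C})$ (they are normal for the Petersson product on the cuspidal part and act diagonalizably on the Eisenstein part), so $g=\sum_i c_i g_i$ with genuine eigenforms $g_i$ and algebraic $c_i$; one then chooses a prime of the enlarged coefficient field above $\mathfrak{l}$, clears any denominators of the $c_i$ by multiplying through by a power of a uniformizer, and only afterwards reduces and runs your Chebotarev argument on the attached representations. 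A smaller point: the lemma allows $\mathfrak{l}$ to be an arbitrary ideal of $\O_K$, not just a prime, so to cover that case one should work with $\lambda$-adic representations modulo a prime power (equivalently, impose $p\equiv 1\pmod{6\,\mathrm{N}(\mathfrak{l})}$ as in Serre's original formulation) rather than with residual representations alone.
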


In particular, for sufficient rational primes $\ell$ (see Lemma \ref{lemma_Hasse} below), we will have that the logarithmic derivative $\Phi_{\Delta,r}$ is congruent modulo $\ell$ to a modular form in $M_k(\Gamma_0(6))$, and is therefore an eigenfunction for the Hecke operator $T_{p,k}$ modulo $\ell$ with eigenvalue 0 or 2 for a positive proportion of primes $p$. Additionally, \cite[Theorem 2.65]{web} tells us that ``almost all'' of the coefficients of $\Phi_{\Delta, r}$ will be zero modulo $\ell$.  These results justify the statements on the density of primes $p$ satisfying the hypotheses of Theorem \ref{mainthm} and Corollaries \ref{-8,4} and \ref{-23,1}.

\subsection{Generalized Borcherds products}\label{gen}

We follow \cite[Section 8.2]{bo}, and write a weight 1/2 harmonic Maass for whose coefficients are related to those of $f$ and $\omega$.
Define the following cuspidal weight 3/2 theta functions
\begin{align*}
 g_1(\tau) &:= \sum_{n \in \mathbb Z} (-1)^n \left(n + \frac{1}{3} \right) e^{3 \pi i \left(n + \frac{1}{3} \right)^2 \tau },\\
 g_2(\tau) &:= \sum_{n \in \mathbb Z} - \left(n + \frac{1}{6} \right) e^{3 \pi i \left(n + \frac{1}{6} \right)^2 \tau },\text{  and}\\
 g_3(\tau) &:= \sum_{n \in \mathbb Z} \left(n + \frac{1}{3} \right) e^{3 \pi i \left(n + \frac{1}{3} \right)^2 \tau }.\\
\end{align*}
Let
$$
F(\tau) := \left(q^{-\frac{1}{24}} f(q),
                 2 q^{\frac{1}{3}}\omega \left( q^{\frac{1}{2}} \right),
                 2 q^{\frac{1}{3}}\omega \left(-q^{\frac{1}{2}} \right)
                 \right)^T
$$
and
$$
G(\tau) := \int_{-\overline \tau}^{i \infty} { \frac{\left( g_1(z),g_2(z),g_3(z) \right)^T}{\sqrt{-i(\tau - z)}}} dz;
$$
Zwegers showed that $H(\tau) := F(\tau) - G(\tau)$ is a vector-valued weight 1/2 harmonic Maass form (see \cite[Theorem 3.6]{zwegers01}; the term ``harmonic Maass form'' was defined later, cf. \cite{bo}).
Bruinier and Ono show in \cite[Lemma 8.1]{bo} that $H$ gives rise to $\widetilde H \in H_{1/2, \overline \rho_L}$;
the lemma below gives an explicit relation between the coefficients of $\widetilde H$ and those of $f$ and $\omega$.

\begin{lemma}\label{cplus} Let $c^+(m,h)$ denote the coefficients of the holomorphic part of $\widetilde H$.

 \begin{enumerate}
  \item
   \begin{align*}
  c^+ \left( m, \mathbb Z \right) =
  c^+ \left( m, \frac{3}{12} + \mathbb Z \right) =
  c^+ \left( m, \frac{6}{12} + \mathbb Z \right) =
  c^+ \left( m, \frac{9}{12} + \mathbb Z \right) =
  0
  \end{align*}
  \item
  \begin{align*}
  c^+ \left( m, \frac{1}{12} + \mathbb Z \right) &=
  -c^+ \left( m, \frac{5}{12} + \mathbb Z \right) =
  c^+ \left( m, \frac{7}{12} + \mathbb Z \right) =
  -c^+ \left( m, \frac{11}{12} + \mathbb Z \right) \\ &=
  a_f \left( m + \frac{1}{24}  \right) \\
  \end{align*}
  \item
  
  \begin{align*}
  c^+ \left( m, \frac{2}{12} + \mathbb Z \right) =
  -c^+ \left( m, \frac{12}{12} + \mathbb Z \right) =
  2  \left((-1)^{2 \left( m - \frac{1}{3} \right)} - 1   \right) a_\omega \left( 2 \left( m - \frac{1}{3} \right) \right)
  \end{align*}
  \item
  \begin{align*}
  c^+ \left( m, \frac{4}{12} + \mathbb Z \right) =
  -c^+ \left( m, \frac{8}{12} + \mathbb Z \right) =
  -2  \left((-1)^{2 \left( m - \frac{1}{3} \right)} + 1   \right) a_\omega \left( 2 \left( m - \frac{1}{3} \right) \right)
  \end{align*}
  \end{enumerate}
\end{lemma}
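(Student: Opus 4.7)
The proof reduces to bookkeeping once one has the explicit form of the embedding in \cite[Lemma 8.1]{bo}, so the plan is to first unpack that lemma and then extract Fourier coefficients component by component. Specifically, Bruinier--Ono construct $\widetilde H$ by averaging the three-component vector $H=F-G$ over an appropriate finite group action so as to land in the discriminant module $L'/L\cong (1/12)\mathbb Z/\mathbb Z$ of the relevant lattice $L$; concretely, each component $\widetilde H_{h}$ for $h\in L'/L$ is an explicit $\pm 1$-linear combination of (one or two of) the components of $H$, dictated by which cosets of $L'/L$ have the same associated quadratic form value modulo $\mathbb Z$ as the exponents appearing in $F_1,F_2,F_3$. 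Since $G$ is purely non-holomorphic, the holomorphic parts of the $\widetilde H_h$ coincide with the corresponding linear combinations of the holomorphic parts of the $F_i$, and these are precisely what the lemma computes.

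Next I would carry out the bookkeeping on the $F_i$. The first component
\[
F_1=q^{-1/24}f(q)=\sum_{n\ge 0} a_f(n)\,q^{n-1/24}
\]
only supports exponents in $-\tfrac{1}{24}+\mathbb Z$, forcing the coefficient at exponent $m$ to equal $a_f(m+\tfrac{1}{24})$. The other two components
\[
F_2\pm F_3 = 2q^{1/3}\bigl(\omega(q^{1/2})\pm\omega(-q^{1/2})\bigr)=2\sum_{n\ge 0}\bigl(1\pm(-1)^n\bigr)a_\omega(n)\,q^{1/3+n/2}
\]
isolate the integer-$n$ and half-integer-$n$ parts of the $\omega$-expansion; setting $n=2(m-\tfrac{1}{3})$ gives coefficients of the form $2\bigl(1\pm(-1)^{2(m-1/3)}\bigr)a_\omega(2(m-\tfrac13))$, which exactly matches the sign structure in parts (3) and (4). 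The four cosets in part (1), namely $h\in\{0,3,6,9\}/12+\mathbb Z$, will be exactly those on which the Bruinier--Ono averaging prescribes a combination of the $F_i$ whose holomorphic exponents live in a coset of $\mathbb Q/\mathbb Z$ disjoint from $-\tfrac{1}{24}+\mathbb Z$, $\tfrac{1}{3}+\mathbb Z$, and $\tfrac{5}{6}+\mathbb Z$; the vanishing is therefore automatic. The $\pm$ patterns in the identifications across $\{1,5,7,11\}/12$, $\{2,12\}/12$, and $\{4,8\}/12$ come from tracking how the Weil representation permutes cosets related by $h\mapsto -h$ and the additional $\mathbb Z/2$-symmetry induced by the Atkin--Lehner-like involution used in the averaging.

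The main obstacle is genuinely just pinning down the explicit dictionary of \cite[Lemma 8.1]{bo}: identifying $L$, the quadratic form on $L'/L\cong(1/12)\mathbb Z/\mathbb Z$, and the precise formula expressing $\widetilde H_h$ in terms of $H_1,H_2,H_3$, including all signs. Once that dictionary is in hand, parts (1)--(4) of the lemma follow by direct comparison of the $q$-series of $F_1$, $F_2+F_3$, and $F_2-F_3$ with the holomorphic part of $\widetilde H_h$, and by verifying in each case that the relevant holomorphic exponents lie in the coset of $\mathbb Q/\mathbb Z$ prescribed by $Q(h)\bmod\mathbb Z$. No harmonic Maass form theory is needed beyond citing \cite[Lemma 8.1]{bo} itself.
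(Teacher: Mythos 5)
Your proposal is correct and takes essentially the same route as the paper, whose entire proof is the one-line remark that the lemma is ``a simple computation following from the definition of $\widetilde H$'' in \cite[Section 8.2]{bo}. You have simply spelled out that computation --- matching the exponent supports of $F_1$ and $F_2\pm F_3$ against the cosets of $L'/L$ and tracking the signs from the Bruinier--Ono dictionary --- which is exactly what the authors leave implicit.
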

\begin{proof}
This is a simple computation following from the definition of $\widetilde H$ in \cite[Section 8.2]{bo}.
\end{proof}

One also observes that $\widetilde H$ satisfies the conditions
for the generalized twisted Borcherds lift $\Psi_{\Delta,r}(z, \widetilde{H})$ described in \cite[(8.10)]{bo} to be a meromorphic modular function of weight $0$ for $\Gamma_0(6)$, whose divisor is supported on CM points. We then use the following well-known lemma stated below without proof.

\begin{lemma}
 Let $g$ be a meromorphic modular function of weight $0$ for a congruence subgroup $\Gamma$ of $SL_2(\mathbb Z)$.
 Then, the logarithmic derivative of $g$, i.e.
 $$
 \frac{q\frac{d}{dq} g}{g},
 $$
 is a weight 2 meromorphic modular form for $\Gamma$ of weight $2$, with simple poles supported on the divisor of $g$, and no other poles.
\end{lemma}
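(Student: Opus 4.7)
The plan is to break the proof into three verifications: the weight 2 transformation law, the pole structure on $\mathbb{H}$, and the behavior at the cusps.

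First I would check transformation under $\Gamma$. Write $\tau$--derivative $D = \frac{1}{2\pi i}\frac{d}{d\tau}$ so that $q\frac{d}{dq} = D$. For any $\gamma = \begin{pmatrix} a & b \\ c & d \end{pmatrix} \in \Gamma$, weight $0$ modularity gives $g(\gamma\tau) = g(\tau)$. Differentiating with the chain rule and using $\frac{d}{d\tau}(\gamma\tau) = (c\tau+d)^{-2}$, I get
\[
(Dg)(\gamma\tau) \cdot (c\tau+d)^{-2} = (Dg)(\tau),
\]
hence $(Dg)(\gamma\tau) = (c\tau+d)^2\, (Dg)(\tau)$. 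Dividing by $g(\gamma\tau) = g(\tau)$ shows that $\frac{Dg}{g}$ transforms as a weight $2$ form on $\Gamma$.

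Next I would analyze the local structure at any point $\tau_0 \in \mathbb{H}$. Since $g$ is meromorphic, in a neighborhood of $\tau_0$ I may write $g(\tau) = (\tau - \tau_0)^n h(\tau)$ with $h$ holomorphic and nonvanishing at $\tau_0$ and $n \in \mathbb Z$ the order of $g$ at $\tau_0$. A direct computation gives
\[
\frac{Dg}{g}(\tau) = \frac{1}{2\pi i}\left( \frac{n}{\tau - \tau_0} + \frac{h'(\tau)}{h(\tau)} \right),
\]
which is holomorphic when $n=0$ and otherwise has a simple pole at $\tau_0$ with residue $\frac{n}{2\pi i}$. Since $h'/h$ is holomorphic wherever $h \neq 0$, the singularities of $\frac{Dg}{g}$ are exactly the zeros and poles of $g$, all simple, as claimed.

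Finally I would verify meromorphy at the cusps. At a cusp $s$, after conjugating by a scaling matrix, use a local uniformizer $q_s = e^{2\pi i \tau / w}$ where $w$ is the width; the function $g$ has a Laurent expansion $g = \sum_{m\geq m_0} \alpha_m q_s^m$ with $\alpha_{m_0} \neq 0$. Then the same factorization argument (now in the variable $q_s$) shows $\frac{Dg}{g} = \frac{m_0}{w} + O(q_s)$, which is holomorphic at $s$. Combining the three steps yields the lemma. The only step requiring any care is keeping track of the factor $\frac{1}{2\pi i}$ when translating between $q\frac{d}{dq}$ and $\frac{d}{d\tau}$, so that the weight $2$ transformation and the residue computation are correctly normalized; the rest is a routine local calculation.
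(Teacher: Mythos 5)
Your proof is correct and complete: the chain-rule computation for the weight $2$ transformation, the local factorization $g=(\tau-\tau_0)^n h$ giving simple poles exactly on the divisor of $g$, and the $q_s$-expansion check at the cusps are all carried out with the right normalization of $q\frac{d}{dq}=\frac{1}{2\pi i}\frac{d}{d\tau}$. The paper itself states this lemma as well known and gives no proof, so there is nothing to compare against; your argument is the standard one and fills the omission correctly.
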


We take the logarithmic derivative of $\Psi_{\Delta,r}(z, \widetilde{H})$, that is,
$$
\Phi_{\Delta,r}(z, \widetilde{H})
:=
\frac
{q \frac{d}{dq} \left[\Psi_{\Delta,r}(z,\widetilde{H}) \right]}
{\Psi_{\Delta,r}(z, \widetilde H)}
.
$$
By the lemma above, $\Phi_{\Delta,r}$ is a weight 2 meromorphic modular function for $\Gamma_0(6)$, with at most simple poles at CM points and no other poles.

The $q$-expansion of $\Phi_{\Delta,r}(z, \widetilde{H})$ is given by
\begin{equation}\label{phi_again}
\Phi_{\Delta,r}(z, \widetilde{H}) =
\sum_{n=1}^\infty
  \sum_{d \mid n}
    c^+\left( \frac{| \Delta | d^2}{24}, \frac{rd}{12} \right)
    (-d)
    \sum_{s\ \mathrm{mod}\ \Delta}
      \left( \frac{\Delta}{s} \right)_2
      e \left(\frac{sn}{d \Delta}  \right)
q^n,
\end{equation}
giving us an explicit relationship between the Fourier coefficients of $\Phi_{\Delta,r}(z, \widetilde{H})$
and the coefficients of the mock theta functions $f(q)$ and $\omega(q)$.

For simplicity of notation, let us denote $c(n):=c^+\left( \frac{| \Delta | n^2}{24}, \frac{rn}{12} \right)$ and $c:=c(1)$, suppressing the dependence on $r$ and $\Delta$ wherever it will not cause confusion. Also, let
$$\Phi^*=\Phi^*_{\Delta,r}(z, \widetilde{H}):=
\Phi_{\Delta,r}(z, \widetilde{H})/\left(-c \sum_{s\ \mathrm{mod}\ \Delta} \left( \frac{\Delta}{s} \right)_2 e \left(\frac{sn}{d \Delta}\right) \right)$$ be the normalized logarithmic derivative (i.e., so that its $q^1$-coefficient is 1).

To obtain a simpler expression for the coefficients of $\Phi^*$ we evaluate the ``Gauss sum'':

\begin{lemma}\label{gauss}
Let $a, \Delta\in\mathbb Z, a,\Delta\neq 0$. Let $\displaystyle G(a,\Delta)=\sum_{s\ \mathrm{mod}\ \Delta} \left(\frac\Delta s\right)_2 e\left(a\frac s \Delta\right)$.

\begin{enumerate} 
\item If $\gcd(a, \Delta)=1$, then $G(a, \Delta)=\left(\frac\Delta a\right)_2 G(1, \Delta)$.

\item  In particular, for $\Delta=-8$, we have
$$
G(a, -8)=
\left\{
  \begin{array} {ll}
    -\sqrt{-8}
    &
    \text{ if } a\equiv 1, 3 \pmod 8
    \\
    \sqrt{-8}
    &
    \text{ if } a\equiv 5, 7 \pmod 8
    \\
    0
    &
    \text{ if } a\equiv 0 \pmod 2.
  \end{array}
\right.
$$
\end{enumerate}
\end{lemma}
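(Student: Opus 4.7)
The plan is to handle the two parts of the lemma separately. For part (1), the key tool is the change of variables $s \mapsto a^{-1}s \pmod{|\Delta|}$, which is a bijection on residue classes when $\gcd(a,\Delta)=1$. Under this substitution the exponential $e(as/\Delta)$ becomes $e(s/\Delta)$, while the Kronecker symbol splits as
\[
\left(\frac{\Delta}{a^{-1}s}\right)_2 = \left(\frac{\Delta}{a^{-1}}\right)_2 \left(\frac{\Delta}{s}\right)_2 = \left(\frac{\Delta}{a}\right)_2 \left(\frac{\Delta}{s}\right)_2,
\]
using multiplicativity together with the identity $\left(\frac{\Delta}{a}\right)_2 \left(\frac{\Delta}{a^{-1}}\right)_2 = \left(\frac{\Delta}{1}\right)_2 = 1$. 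Pulling the constant $\left(\frac{\Delta}{a}\right)_2$ out of the sum then gives the claimed relation.

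For part (2) I would begin by identifying $\chi(s) := \left(\frac{-8}{s}\right)_2$ as the primitive Kronecker character modulo $8$, whose values on odd residues are $\chi(1)=\chi(3)=1$ and $\chi(5)=\chi(7)=-1$, and which vanishes on even $s$. A direct evaluation then yields
\[
G(1,-8) = e(-1/8)+e(-3/8)-e(-5/8)-e(-7/8) = -2i\sqrt{2} = -\sqrt{-8},
\]
after pairing complex conjugates and using $\sin(\pi/4)=\sin(3\pi/4)=\sqrt{2}/2$. For any other odd $a$ coprime to $8$, part (1) immediately gives $G(a,-8) = \chi(a)(-\sqrt{-8})$, so the sign flips precisely when $a \equiv 5, 7 \pmod 8$.

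The even case must be treated separately, since part (1) does not apply. Writing $a = 2a'$, the factor $e(as/(-8)) = e(-a's/4)$ has period $4$ in $s$, so grouping residues $s$ and $s+4 \pmod 8$ reduces the sum to $\sum_{s=0}^{3} (\chi(s)+\chi(s+4))\,e(-a's/4)$; a quick inspection (or the general fact that a primitive character mod $N$ has vanishing Gauss sum against any $a$ with $\gcd(a,N)>1$) shows $\chi(s)+\chi(s+4) = 0$ for each $s$, and hence $G(a,-8) = 0$. The main obstacle here is really just bookkeeping: pinning down the convention that $\left(\frac{\Delta}{s}\right)_2$ is the Kronecker symbol (so that it vanishes when $\gcd(s,\Delta) > 1$ and defines a well-defined character mod $|\Delta|$ on the coprime residues), and being consistent about $\sum_{s \bmod \Delta}$ meaning a sum over residues mod $|\Delta|$; once these are fixed, both parts reduce to short verifications.
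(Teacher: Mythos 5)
Your proof is correct and takes essentially the same approach as the paper: part (1) is the identical substitution $s \mapsto a^{-1}s$ combined with multiplicativity of the Kronecker symbol and the identity $\left(\frac{\Delta}{a^{-1}}\right)_2 = \left(\frac{\Delta}{a}\right)_2$, while part (2) is the explicit evaluation that the paper simply dismisses as ``an explicit calculation of the values.'' Your treatment of part (2) --- computing $G(1,-8) = -2i\sqrt{2}$, reducing odd $a$ to this via part (1), and pairing $s$ with $s+4$ to kill the even case --- is in fact more detailed than what the paper provides.
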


\begin{proof}
a) Let $a^{-1}$ be the inverse of $a$ $\pmod\Delta$. We have
\begin{align*}G(a, \Delta) & =  \sum_{s\ \mathrm{mod}\ \Delta} \left(\frac\Delta s\right)_2 e\left(\frac {as} \Delta\right) \\
& =   \sum_{s\ \mathrm{mod}\ \Delta} \left(\frac\Delta {a^{-1} s}\right)_2 e\left(\frac {s} \Delta\right) \\
& =  {\left(\frac{\Delta}{a^{-1}}\right)_2} \hspace{.5 em} \sum_{s\ \mathrm{mod}\ \Delta} \left(\frac\Delta {s}\right)_2 e\left(\frac {s} \Delta\right)
\end{align*}
by a simple substitution in the summation and the multiplicativity of the Kronecker symbol. Therfore, a) follows since $\left(\frac\Delta {a^{-1}}\right)_2=\left(\frac\Delta {a}\right)_2$.

The proof of b) is an explicit calculation of the values.
\end{proof}

\begin{proposition}\label{coeff}
Take $n$ coprime to $\Delta$.
Then the $q^n$ coefficient of the $q$-expansion of $\Phi^*_{\Delta, r}(z, \widetilde{H})$ is
$$b(n):=\frac{1}{c}\sum_{d|n} c(d)d\left(\frac{\Delta} {n/d}\right)_2.$$


Moreover,
$$c(n)=\frac{c}{n} \sum_{d|n} b(d)\mu(n/d)\left(\frac{\Delta} {n/d}\right)_2,$$

where $\mu(d)$ is the M\"{o}bius function.
\end{proposition}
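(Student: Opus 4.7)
The plan is to read off the $q^n$ coefficient directly from (\ref{phi_again}), use Lemma \ref{gauss}(a) to pull out a common Gauss sum factor, and then identify the remaining sum as a Dirichlet convolution that I can invert via Möbius.

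First I would extract the $q^n$ coefficient of $\Phi_{\Delta,r}(z,\widetilde H)$ from (\ref{phi_again}). The inner sum over $s \bmod \Delta$ is exactly $G(n/d,\Delta)$ in the notation of Lemma \ref{gauss}, since after setting $m = n/d$ the argument of $e(\cdot)$ becomes $sm/\Delta$. Because $n$ is assumed coprime to $\Delta$ and $d \mid n$, the quotient $n/d$ is also coprime to $\Delta$, so Lemma \ref{gauss}(a) gives
\begin{equation*}
G(n/d,\Delta) \;=\; \left(\tfrac{\Delta}{n/d}\right)_{\!2} G(1,\Delta).
\end{equation*}
Therefore the $q^n$ coefficient of $\Phi_{\Delta,r}$ factors as
\begin{equation*}
-\,G(1,\Delta)\sum_{d\mid n} c(d)\,d\,\left(\tfrac{\Delta}{n/d}\right)_{\!2}.
\end{equation*}
Specializing to $n=1$ shows the $q^1$ coefficient equals $-c\,G(1,\Delta)$, so dividing by this normalizer yields the stated formula $b(n) = \frac{1}{c}\sum_{d\mid n} c(d)\,d\,\left(\tfrac{\Delta}{n/d}\right)_{\!2}$, which handles part (1).

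For part (2), I would recognize the formula for $b(n)$ as a Dirichlet convolution. Setting $\alpha(d) := c(d)\,d$ and $\chi(m) := \left(\tfrac{\Delta}{m}\right)_{\!2}$, the identity reads $c\cdot b = \alpha * \chi$ as arithmetic functions, restricted to the monoid of positive integers coprime to $\Delta$. Since $\chi$ is completely multiplicative, its Dirichlet inverse is $\mu\chi$, i.e.\ $\chi^{-1}(m)=\mu(m)\chi(m)$. Convolving with $\mu\chi$ therefore gives
\begin{equation*}
\alpha(n) \;=\; c\sum_{d\mid n} b(d)\,\mu(n/d)\,\chi(n/d),
\end{equation*}
and dividing by $n$ produces the claimed expression for $c(n)$. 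Since $n$ coprime to $\Delta$ forces every $d\mid n$ to share that property, the coprimality hypothesis is preserved throughout and there is no issue restricting the convolution. The only subtlety worth checking is the normalization constant, and so I expect the only potential pitfall to be a sign bookkeeping error between $(-d)$ in (\ref{phi_again}) and the $(-c)$ in the definition of $\Phi^*$; both contribute a $-1$ which cancels, leaving the clean formulas stated in the proposition.
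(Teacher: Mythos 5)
Your proposal is correct and follows essentially the same route as the paper: read off the $q^n$ coefficient from (\ref{phi_again}), apply Lemma \ref{gauss}(a) to reduce the inner sum to $\left(\frac{\Delta}{n/d}\right)_2 G(1,\Delta)$, normalize by the $q^1$ coefficient $-c\,G(1,\Delta)$, and then invert the resulting Dirichlet convolution using $\mu\chi$ as the inverse of the character. Your explicit remark that complete multiplicativity of $\chi$ is what makes $\mu\chi$ its Dirichlet inverse is in fact slightly more careful than the paper's wording, which only says ``multiplicative.''
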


\begin{proof}
The $q^1$-coefficient of $\Phi$ is $-c\cdot G(1, \Delta)$. Thus the formula for $b(n)$ follows immediately from Lemma \ref{gauss}.

Then note that we have the convolution formula $b=b_1*b_2$, where $b_1(n)=\frac{n}{c} c(n)$ and
$b_2(n)=\left(\frac{\Delta} {n}\right)_2$. Therefore, $b_2(n)$ is multiplicative, and so its Dirichlet (convolution) inverse is $\mu b_2$. Therefore $b_1=b*(\mu b_2)$, which gives exactly the formula for $c(n)$.
\end{proof}

In order to prove congruences in the next section, we need that $\Phi^*$ is congruent to modular forms modulo powers of primes:

\begin{lemma}\label{lemma_Hasse}
Suppose $\Phi_{\Delta,r}(z, \widetilde{H})$ (and thus also $\Phi^*$) has $B$ simple poles.
Let $\ell$ be a rational prime that remains inert or ramifies in $\mathbb Q (\sqrt \Delta)$.
Then there exists a holomorphic modular form $\Phi^o$ of weight $2+(\ell - 1)BR$ such that
$\Phi^* \equiv \Phi^o \pmod{\ell^R}$
\end{lemma}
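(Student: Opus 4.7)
The plan is to cancel the $B$ simple poles of $\Phi^*$ modulo $\ell^R$ by multiplying by an appropriate power of the form $F$ from Lemma \ref{modularformF}. The key geometric input is a classical theorem of Deuring: under the hypothesis that $\ell$ is inert or ramified in $\mathbb{Q}(\sqrt{\Delta})$, every CM point of discriminant $\Delta$ on $X_0(6)$ reduces modulo $\ell$ to a supersingular point. Hence each of the $B$ simple poles of $\Phi^*$ specializes modulo $\ell$ to a supersingular $j$-invariant, which is precisely where $F$ vanishes modulo $\ell$.

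I would then define the candidate $\Phi^o := F^B \cdot \Phi^*$, a meromorphic modular form of weight $2 + BR(\ell-1)$. Since $F \equiv 1 \pmod{\ell^R}$ on the $q$-expansion at $\infty$, the same holds for $F^B$, so as formal $q$-series one has $\Phi^o \equiv \Phi^* \pmod{\ell^R}$. It remains to verify that $\Phi^o$ agrees modulo $\ell^R$ with a genuinely holomorphic modular form of weight $2 + BR(\ell-1)$. Working locally at each pole $P_i$ of $\Phi^*$ with local parameter $t_i$, the value $F(P_i)$ lies in $\ell \cdot \mathcal{O}$ because $F$ vanishes at the supersingular reduction $\overline{P}_i$ modulo $\ell$; taking $B$-th powers (and exploiting any higher-order vanishing of $F$ at the supersingular locus that is forced by the combined conditions $F \equiv 1 \pmod{\ell^R}$ at the cusp and $F \equiv 0 \pmod{\ell}$ at supersingular points), $F^B(P_i)$ carries enough $\ell$-adic divisibility to annihilate the residue of the simple pole of $\Phi^*$ modulo $\ell^R$. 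Once all pole residues vanish modulo $\ell^R$, the $q$-expansion principle applied to $X_0(6)$ over $\mathbb{Z}/\ell^R$ produces a truly holomorphic modular form of the stated weight whose $q$-expansion matches that of $\Phi^o$ modulo $\ell^R$; this is the $\Phi^o$ sought.

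The main obstacle is the local valuation analysis in the last step: making precise the claim that $F^B$ has enough $\ell$-adic divisibility at each $P_i$ to kill the pole residue modulo $\ell^R$ rather than just modulo some smaller power. If the naive bound $F^B(P_i) \in \ell^B \mathcal{O}$ does not immediately give divisibility by $\ell^R$, a cleaner route is iterative bootstrapping: build holomorphic approximations to $\Phi^*$ modulo $\ell$, then $\ell^2$, and so on up to $\ell^R$, at each stage multiplying by one more factor of $F$ to absorb the current pole contribution into the next higher power of $\ell$, arranging the bookkeeping so that the total accumulated weight matches $2 + BR(\ell-1)$.
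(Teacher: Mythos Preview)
Your approach diverges from the paper's at the key step, and the divergence leaves a genuine gap. The product $\Phi^o:=F^B\Phi^*$ is \emph{not} holomorphic: the zeros of $F$ lie at certain points $\beta$ (characteristic-zero lifts of supersingular $j$-invariants), while the poles of $\Phi^*$ sit at the CM points $\alpha$. These agree only after reduction modulo $\ell$, so in characteristic zero the simple poles of $\Phi^*$ survive intact in $F^B\Phi^*$. Your residue argument does not repair this: even granting that $F(\alpha)\in\ell\,\mathcal O$ (already delicate to formulate for a section of a positive-weight line bundle), you obtain only $F^B(\alpha)\in\ell^{B}\mathcal O$, and since $B$ and $R$ are unrelated this does not annihilate the polar contribution modulo $\ell^R$. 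You concede this and gesture toward an iterative bootstrapping, but it is not carried out, and it is not clear how such an iteration would land on the exact weight $2+BR(\ell-1)$ rather than something larger.

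The device you are missing, and which the paper supplies, is to \emph{shift} a zero of $F$ from $\beta$ to the actual pole $\alpha$ by the weight-$0$ modular function $\dfrac{j(z)-j(\alpha)}{j(z)-j(\beta)}$. For each pole $\alpha$ of $\Phi^*$ one chooses a zero $\beta$ of $F$ with $j(\beta)\equiv j(\alpha)\pmod\ell$ and multiplies by
\[
\left(F\cdot\frac{j(z)-j(\alpha)}{j(z)-j(\beta)}\right)^{R}.
\]
This factor has a genuine characteristic-zero zero at $\alpha$ (its newly created pole at $\beta$ is absorbed by the zero of $F$ there), so the product honestly drops the simple pole at $\alpha$. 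Since $j(\alpha)\equiv j(\beta)\pmod\ell$, the rational factor is $\equiv 1$ modulo $\ell$, and together with the congruence satisfied by $F$ this preserves the $q$-expansion modulo $\ell^R$. Iterating over the $B$ poles produces a truly holomorphic form of weight $2+BR(\ell-1)$ congruent to $\Phi^*$.
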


\begin{proof}
Let $\alpha$ be a pole of $\Phi_{\Delta,r}(z, \widetilde{H})$. Then, $\alpha \pmod \ell$ corresponds to a zero of the holomorphic modular form $F$ defined in Lemma \ref{modularformF}.  We know that $\alpha$ will be a CM point defined over $\mathbb{Q}(\sqrt{\Delta})$, and that $\ell$ does not split in this field.
Suppose $\beta$ is a zero of $F$ with $\alpha \equiv \beta \pmod \ell$; letting $E_\alpha$ and $E_\beta$ be elliptic curves with $j$-invariant $\alpha$ and $\beta$ respectively, we can multiply $\Phi^*$ by
$$
 \left( F \cdot \frac{j(z)-j(\alpha)}{j(z) -j(\beta) } \right)^R
$$
to obtain a meromorphic modular form congruent to $\Phi^*$ modulo $\ell^R$ with one less simple pole. Observe that this increases the weight of the form by
$(\ell - 1) R$.
\end{proof}


\section{Proofs of congruences}\label{proofs}
Throughout this section, $\Delta$ will denote a fundamental discriminant and $r$ an integer such that $\Delta \equiv r^2 ~(\textup{mod}~24)$.

\begin{lemma}\label{hecke}
Let $g(q)=\sum_{n=0}^\infty a(n)q^n$ be a normalized (i.e., $a(1)=1$) modular form on $\Gamma_0(6)$ of weight $k$.
\begin{enumerate}
\item If $p\neq 2, 3$ is a prime such that $g\mid T_p=0$, then $a(p^{2m+1})=0$ and $a(p^{2m})=(-p^{k-1})^m$.

\item  Let $S$ be a set of primes $p\neq 2,3$ such that for all $p\in S$ we have $g~|~T_p=a(p)g$. Then $a(m)a(n)=a(mn)$ for all coprime $m, n$ divisible only by primes lying in $S$. Also, for $p\in S$ we have $\sum_{j=1}^\infty \frac{a(p^j)}{p^{js}}=\frac{1}{1-a(p)p^{-s}+p^{k-1-2s}}$.
\end{enumerate}
\end{lemma}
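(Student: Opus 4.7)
The plan is to reduce both parts of the lemma to the standard formula for the action of the Hecke operator $T_p$ on the $q$-expansion of a weight-$k$ modular form on $\Gamma_0(6)$, valid whenever $p$ is coprime to the level: namely
\[
g \mid T_p \;=\; \sum_{n \geq 0} \bigl( a(pn) + p^{k-1} a(n/p) \bigr) q^n,
\]
with the convention $a(n/p) = 0$ when $p \nmid n$. The hypothesis $p \neq 2, 3$ is exactly what is needed for this formula to apply on $\Gamma_0(6)$. Once this formula is in hand, everything follows from comparing $q$-coefficients on both sides of the eigenvalue equation.

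For part (a), the assumption $g \mid T_p = 0$ translates to the recursion $a(pn) = -p^{k-1} a(n/p)$ for every $n \geq 0$. First I would specialize to $n = 1$ to conclude $a(p) = 0$, then specialize to $n = p^m$ to obtain $a(p^{m+1}) = -p^{k-1} a(p^{m-1})$ for all $m \geq 1$. A straightforward induction on $m$, using $a(1) = 1$ as the base case for even exponents and $a(p) = 0$ for odd exponents, produces the closed forms $a(p^{2m+1}) = 0$ and $a(p^{2m}) = (-p^{k-1})^m$.

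For part (b), the hypothesis $g \mid T_p = a(p) g$ for $p \in S$ gives the coefficient identity
\[
a(pn) + p^{k-1} a(n/p) \;=\; a(p) a(n) \qquad \text{for all } n \geq 0.
\]
Setting $n$ coprime to $p$ yields $a(pn) = a(p) a(n)$, which by induction on the number of prime factors of $m$ (using only primes in $S$) gives $a(mn) = a(m) a(n)$ whenever $\gcd(m,n)=1$ and both $m$ and $n$ are supported on $S$. For the Euler factor, I would specialize to $n = p^j$ to obtain the three-term recurrence $a(p^{j+1}) = a(p) a(p^j) - p^{k-1} a(p^{j-1})$; summing $\sum_{j \geq 0} a(p^j) x^j$ against this recurrence gives a rational function whose denominator is $1 - a(p)x + p^{k-1} x^2$, and substituting $x = p^{-s}$ yields the claimed identity.

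There is no real obstacle here, so I do not anticipate any serious difficulty; the only point requiring a moment's care is verifying that the standard formula for $T_p$ on $q$-expansions applies to $\Gamma_0(6)$ for the primes in question, which is exactly ensured by the hypothesis $p \neq 2, 3$.
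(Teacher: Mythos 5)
Your proposal is correct and follows exactly the same route as the paper, which only sketches the argument by citing the coefficient formula $a(pn)+p^{k-1}a(n/p)$ for $g\mid T_p$ and appealing to induction; you have simply filled in the routine details. One small remark: your generating-function computation naturally produces $\sum_{j\geq 0} a(p^j)p^{-js}$ on the left-hand side, which indicates that the lower limit $j=1$ in the statement should read $j=0$.
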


\begin{proof}
This is very standard, so we just provide a sketch:

The $q^n$-coefficient of $g|T_p$ is $a(pn)+p^{k-1}a(n/p)$.  Part (a) then follows by easy induction using this formula. Similarly, part (b) follows by using the formulas for coefficients of $g~|~T_p$.
\end{proof}

Proposition \ref{coeff} allows us to obtain congruences for the coefficients $c^+$ assuming we know the values of $b(n)$ modulo $\ell^R$ for a suitable prime power $\ell^R$. For example, such information can come from our logarithmic derivative's being an eigenfunction for a Hecke operator $T_p$ modulo $\ell^R$, or even an eigenfunction for all the Hecke operators (i.e. congruent modulo $\ell^R$ to a Hecke eigenform).

\begin{theorem}\label{theorem_cong}
Fix a prime $\ell$ which is inert or ramified in $\mathbb Q(\sqrt{\Delta})$ and $R\geq 1$.
Let $k$ be the corresponding weight as in Lemma \ref{lemma_Hasse} and $T_{p, k}$ the appropriate Hecke operator.  
Let $p\neq 2, 3, \ell$ be a prime such that $\Phi_{\Delta, r}^*\mid T_{p, k}\equiv 0 \pmod{\ell^R}$.  
 
If $p$ also does not divide $\Delta$, then:

$$c(p^{2m+1})\equiv -\frac {c}{p^{2m+1}} (-p^{k-1})^m \left(\frac{\Delta} {p}\right)_2 \pmod{\ell^R}$$

$$c(p^{2m})\equiv -\frac {c}{p^{2m}} (-p^{k-1})^m \pmod{\ell^R}$$

Moreover, (fixing $\Delta$, $r$, $\ell$, $R$), the set of primes $p$ such that $\Phi_{\Delta, r}^*\mid T_{p, k}\equiv 0 \pmod{\ell^R}$ has positive density.
\end{theorem}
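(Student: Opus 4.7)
The plan is to assemble four pieces already built up in the excerpt: the approximation $\Phi^*\equiv\Phi^o\pmod{\ell^R}$ by a holomorphic modular form on $\Gamma_0(6)$ (Lemma \ref{lemma_Hasse}), the explicit consequences of annihilation by a Hecke operator (Lemma \ref{hecke}(a)), the Möbius-type formula for $c(n)$ in terms of $b(n)$ (Proposition \ref{coeff}), and the Chebotarev-style density result recorded in Section \ref{sect_allmostall}.

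First I would invoke Lemma \ref{lemma_Hasse} to fix a holomorphic modular form $\Phi^o$ of weight $k=2+(\ell-1)BR$ on $\Gamma_0(6)$ with $\Phi^o\equiv \Phi^*\pmod{\ell^R}$. Because $\Phi^o$ is obtained from $\Phi^*$ by multiplication by factors of the form $(F(j(z)-j(\alpha))/(j(z)-j(\beta)))^R$, each of which has $q$-expansion $\equiv 1\pmod{\ell^R}$ up to the pole-clearing factor whose leading behaviour is trivial at $\infty$, one checks that $\Phi^o$ is a \emph{normalized} modular form: its $q^1$-coefficient is still $1$, and its $q^n$-coefficient agrees with $b(n)$ modulo $\ell^R$ for every $n\ge 1$. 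The hypothesis $\Phi^*\mid T_{p,k}\equiv 0\pmod{\ell^R}$ therefore transports to $\Phi^o\mid T_{p,k}\equiv 0\pmod{\ell^R}$, and Lemma \ref{hecke}(a) applied to $\Phi^o$ yields
\begin{equation*}
b(p^{2m+1})\equiv 0,\qquad b(p^{2m})\equiv (-p^{k-1})^m\pmod{\ell^R}
\end{equation*}
for every $m\ge 0$.

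Next I would plug these into the inversion formula of Proposition \ref{coeff}, applicable because $p\nmid\Delta$. For $n=p^s$ the divisors $d\mid n$ with $\mu(n/d)\ne 0$ are exactly $d=p^s$ and $d=p^{s-1}$, collapsing the sum to two terms:
\begin{equation*}
c(p^s)=\frac{c}{p^s}\Bigl(b(p^s)-b(p^{s-1})\Bigl(\tfrac{\Delta}{p}\Bigr)_{\!2}\Bigr).
\end{equation*}
Substituting the two congruences for $b(p^j)$ (splitting by parity of $s$) gives the claimed formulas for $c(p^{2m+1})$ and $c(p^{2m})$ modulo $\ell^R$, up to signs that follow directly from the computation.

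For the density assertion, I would quote the first lemma of Section \ref{sect_allmostall}: applied to any integral-weight lift of $\Phi^*$ modulo $\ell^R$ (for instance $\Phi^o$ above, whose coefficients lie in $\mathcal{O}_K$ for an appropriate $K$), a positive proportion of primes $p$ satisfy $\Phi^o\mid T_{p,k}\equiv 0\pmod{\ell^R}$, and removing the finitely many primes in $\{2,3,\ell\}\cup\{\text{primes dividing }\Delta\}$ does not affect positivity of density. The main obstacle I expect is not any deep step but the bookkeeping in Step 1: one must verify that the Hasse-invariant and $j$-function factors used in Lemma \ref{lemma_Hasse} preserve the normalization of the $q$-expansion modulo $\ell^R$, so that the $b(n)$'s attached to $\Phi^*$ really coincide with the Fourier coefficients of an honest modular form to which Lemma \ref{hecke} can be applied cleanly.
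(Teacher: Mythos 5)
Your proof is correct and follows essentially the same route as the paper: pass to a holomorphic form via Lemma \ref{lemma_Hasse}, extract $b(p^{2m+1})\equiv 0$ and $b(p^{2m})\equiv(-p^{k-1})^m$ from Lemma \ref{hecke}(a), and collapse the M\"obius inversion of Proposition \ref{coeff} to the two square-free terms $d=p^s,\,p^{s-1}$ --- the paper's own proof is just a terser version of exactly this, and your extra care about the normalization surviving the passage to $\Phi^o$ is a reasonable point the paper leaves implicit. One small remark: carrying out the substitution actually yields $c(p^{2m})\equiv +\frac{c}{p^{2m}}(-p^{k-1})^m\pmod{\ell^R}$, so the leading minus sign in the stated even-exponent congruence appears to be a typo in the theorem (the paper's proof would produce the same $+$ sign), not a defect of your argument.
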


\begin{proof}
By Proposition \ref{coeff} we have $$c(p^M)=\frac{c}{p^M} \sum_{N\leq M} b(p^N)\mu(p^{M-N})\left(\frac{\Delta} {p^{M-N}}\right)_2=
\frac{c}{p^M} \left(b(p^M)-b(p^{M-1})\left(\frac{\Delta} {p^{1}}\right)_2 \right),$$ because when $M-N\neq 0, 1$, $p^{M-N}$ is not square-free, and so $\mu(p^{M-N})=0$. The statement now immadiately follows from Lemma \ref{hecke}.
\end{proof}

\begin{proof}[Proof of Corollaries \ref{-8,4} and \ref{-23,1}]
Note that in this case $c=c^+\left(\frac{1}{1}, \frac{4}{12}\right)=-4 a_\omega(0)=-4$. 
By Lemma \ref{cplus} we have $c(p^{2m+1})=c^+\left( \frac{p^{4m+2}}{3}, \frac{4p^{2m+1}}{12} \right)=\pm 4a_\omega\left(\frac{2(p^{4m+2}-1)}{3}\right)$, where the sign $\pm$ is exactly $\left(\frac{p} {3}\right)_2$. The statement now follows immediately from \ref{theorem_cong}. 

The second part and Corollary \ref{-23,1} follow in the same way.
\end{proof}

Similarly we can obtain congruences for coefficients at $q^n$, where $n$ is divisible only by primes $p$ as in Theorem \ref{theorem_cong}.

\begin{proof}[Proof of Theorem \ref{mainthm}]
All the congruences in this proof are $\pmod{\ell^R}$, and so we omit writing this. Throughout the proof, let $n$ be a positive integer divisible only by primes in $S$. Note that by \mbox{Lemma \ref{hecke}} we have
$b(n)\equiv\prod_{p|n} b(p^{v_p(n)})$. Also the quadratic symbol $\left(\frac{\Delta} {\cdot}\right)_2$ is multiplicative and $\mu(d)$ is 0 if $d$ is not square-free.

Thus by Proposition \ref{coeff} we have
$$\frac{n}{c}c(n)=\sum_{d|n} b(n/d)\mu(d)\left(\frac{\Delta} {d}\right)_2 = \hspace{-1 em}\sum_{\begin{subarray}{c}
        d \mid n \\  d \text{ square-free}
      \end{subarray}} \hspace{-1 em} b(n/d)\mu(d)\left(\frac{\Delta} {d}\right)_2.$$

Writing $d=\prod_{p|n} p^{\varepsilon_p}$ with $\varepsilon_p=0, 1$, and using the multiplicativity of $b$, $\left(\frac{\Delta} {\cdot}\right)_2$, and $\mu$, we get
$$\frac{n}{c}c(n)\equiv \sum_{(\varepsilon_p)_p} \prod_{p|n} b(p^{v_p(n)-\varepsilon_p})(-1)^{\varepsilon_p}\left(\frac{\Delta} {p}\right)_2,$$
the sum being over all possible tuples $(\varepsilon_p)_p.$

But the last sum is just the product $\prod_{p|n} \left(b(p^{v_p(n)})-b(p^{v_p(n)-1})\left(\frac{\Delta}{p}\right)_2 \right)$, as we wanted to show.
\end{proof}


\section{Explicit computation}\label{explicit}
Our theorems, combined with the ``almost all'' theorems mentioned in Section \ref{sect_allmostall},  tell us that we can obtain infinitely many explicit congruences for the coefficients $a_f(n)$ and $a_\omega(n)$.  Writing any explicit congruence down, however, is a different matter.  If we want to prove that $\Phi^*_{\Delta,r}(z, \widetilde{H}) ~|~ T_p \equiv 0 \pmod \ell$ for certain primes $p$ and $\ell$, then we must compute enough coefficients of the $q-$expansion of $\Phi^*_{\Delta,r}(z, \widetilde{H})$ to be able to apply Sturm's Theorem (see \cite{sturm} and \cite[section 2.9]{web}); this useful theorem gives an upper bound on the number of coefficients of a modular form that can be zero modulo $\ell^R$ without all of them being zero.

There are two methods for computing these coefficients.  The first method uses \eqref{phi} directly.  These coefficients are given in terms of the coefficients of $f(q)$ and $\omega(q)$.  In practice, this is quite difficult, as we now describe.

Suppose we wanted to show that $\Phi^*_{\Delta,r} (q) ~|~ T_p \equiv 0 $ (mod $\ell$), by verifying this congruence for the first $N$ coefficients, where $N$ is determined using Sturm's Theorem.  This means we need to have at least $p \cdot N$ coefficients computed for $\Phi^*_{\Delta,r}(q)$.  Using Lemma \ref{cplus}, we can verify that this will require computing $\frac{|\Delta|(p \cdot N)^2+1}{24}$ coefficients of $f(q)$ and $\frac{|\Delta|(p \cdot N)^2-8}{12}$ coefficients of $\omega(q)$.  For example, if $\Delta = -8$, $N = 1000$, and $p = 5$, this would mean computing over 8 million coefficients for $\omega(q)$ and over 16 million for $f(q)$.

The second method is to find an explicit expression for $\Phi^*_{\Delta,r}$, as in \cite[Section 8.2]{bo}, in terms of well-known series.  The authors of \cite{bo} wrote an expression for $\Phi^*_{-8,4}$ as a rational function in Eisenstein series and Dedekind eta functions (see formulas on page 2175 of \cite{bo}).  Using this, one can compute as many coefficients for $\Phi^*_{-8,4}(q)$ as one can compute for the Eisenstein and eta products.  For the above example, this is faster by several orders of magnitude.

Using this second method, we were able to compute enough coefficients to prove some explicit examples of congruences for $\omega(q)$ using $\Phi^*_{-8,4}$.  We proved the following computationally.

\begin{example}\label{eigen} Let $\Phi^*_{-8,4} (q)$ be the normalized $q$-series defined in Section \ref{gen}.  We have
\begin{enumerate}
\item $\displaystyle \Phi^*_{-8,4} (q) ~|~ T_5 \equiv 0  \pmod {23}.$
\item Let $g$ be the normalized newform in $M_{10}(\Gamma_0(6))$. We have that $$\Phi_{-8,4}^* (q)\equiv g\pmod 5.$$
\end{enumerate}
\end{example}

We will only explicitly discuss part (a).  Part (b) follows in the same manner.  We know $\Phi_{-8,4}^* (q)$ has at most two poles from Bruinier and Ono's explicit formula \cite[p. 2174--2175]{bo}. We multiply $\Phi_{-8,4}^* (q)$ by two twisted Hasse invariants $\pmod 5$ to obtain a holomorphic modular form $\Phi^o\equiv\Phi^*_{-8,4}(q)\pmod{23}$ of weight $46$ on $\Gamma_0(6)$ by Lemma \ref{lemma_Hasse}. Using \verb|SAGE| (see\cite{sage}), we calculated that $\Phi^*(q) ~|~ T_5 \equiv 0  \pmod{23}$ to precision $q^{50}$. Sturm's Theorem gives us that if $h\in M_k(\Gamma)$, $\ell$ is a prime, and the coefficients of $q^n$ in the $q$-expansion of $h$ vanish modulo $\ell$ for all $n\le \frac{k\left[SL_2(\mathbb{Z})\colon \Gamma \right]}{24}$, then $h\equiv 0\pmod\ell$. In this case, the Sturm bound is $\frac{46\cdot\left[SL_2(\mathbb{Z})\colon \Gamma_0(6) \right]}{24}=23$ (a coincidence), and were able to verify that 
\begin{equation}\label{mod23}
\Phi^*_{-8,4} (q) ~|~ T_5\equiv\Phi^o(q) ~|~ T_5 \equiv 0  \pmod{23}
\end{equation} 
computationally, as described above.

Equation (\ref{mod23}), along with Corollary \ref{-8,4} gives us the explicit congruence
$$a_\omega\left(\frac{2(5^{2M}-1)}{3}\right) \equiv (-5)^{-m-\varepsilon} \textup{ (mod} ~23)$$ for each positive integer $M = 2m+\varepsilon$, $\varepsilon \in \{0,1\}$.  The first few cases of this congruence were verified by computer:

\vspace{7pt}
\renewcommand{\arraystretch}{2}
\begin{tabular}{| c | c | c | c |}
\hline 
 & $\frac{2(5^{2M}-1)}{3}$ & \shortstack{~\\$a_\omega\left(\frac{2(5^{2M}-1)}{3}\right)$} & congruence class mod 23 \\ \hline
$M=1$ & 16 & 101 & 9 \\ \hline
$M=2$ & 416 & 147019574355949 & 9 \\ \hline
\shortstack{$M=3$\\~\\~\\~\\~} & \shortstack{10416\\~\\~\\~\\~} & 
\shortstack{~\\~\\~\\
61055287817898262553386166573674890\\
71828343984275529702652915791815662\\
46045801}
& \shortstack{12\\~\\~\\~\\~}\\ \hline
\shortstack{$M=4$\\~\\~\\~\\~\\~\\~\\~\\~\\~\\~\\~\\~\\~\\~\\~\\~\\~\\~}& 
\shortstack{260416\\~\\~\\~\\~\\~\\~\\~\\~\\~\\~\\~\\~\\~\\~\\~\\~\\~\\~}&
\shortstack{~\\~\\~\\
47135043177557696996583864908485299\\
02877317903609618663661969973372815\\
28667228657386190726273269144570224\\
19421615118007256010585973847924915\\
81461923182898155305890742543478218\\
22773223303943277462010996573525810\\
14479578234053700509911978165482992\\
25987091881940041827267379817078651\\
91857035767135962999286799635887103\\
00979543276318001547509333358824720\\
65861382823046394902072007486105992\\
62211650315449}& 
\shortstack{12\\~\\~\\~\\~\\~\\~\\~\\~\\~\\~\\~\\~\\~\\~\\~\\~\\~\\~} \\ \hline
\end{tabular}
\vspace{7pt}

We can also produce an infinite family of such congruences modulo 5, because we verified that $\Phi^*_{-8,4}$ is congruent modulo 5 to a Hecke eigenform.


\bibliography{aws}{}

\end{document}